  \theoremstyle{plain}
 \newtheorem{thm}{Theorem}[section]
 \theoremstyle{definition}
 \newtheorem{defn}[thm]{Definition}
 \theoremstyle{definition}
 \newtheorem{defnthm}[thm]{Definition and Theorem}
 \theoremstyle{definition}
 \newtheorem{prop}[thm]{Proposition} 
 \theoremstyle{definition}
 \theoremstyle{definition}
 \newtheorem{rem}[thm]{Remark}
 \theoremstyle{definition}
 \newtheorem{cor}[thm]{Corollary}
 \theoremstyle{definition}
 \theoremstyle{definition}
 \theoremstyle{definition}
 \theoremstyle{definition}
 \theoremstyle{definition}
\author{Rachel Newton and Ana Ros Camacho}
\title{Strangely dual orbifold equivalence I}
\begin{document}
\maketitle

\abstractname{.~ In this brief note we prove orbifold equivalence between two potentials described by strangely dual exceptional unimodular singularities of type $K_{14}$ and $Q_{10}$ in two different ways. 
The matrix factorizations proving the orbifold equivalence give rise to equations whose solutions are permuted by Galois groups which differ for different expressions of the same singularity.}

\tableofcontents

\section{Introduction}

In this paper, we present two ways of proving an orbifold equivalence between two potentials describing two strangely dual unimodular exceptional singularities,
namely $Q_{10}$ and $K_{14}$. In addition, we observe that each matrix factorization proving this orbifold equivalence depends on a different Galois orbit. First, we will recall the notion of orbifold equivalence and motivate this research direction, leaving computations for Sections 3 and 4. We also include an appendix, written by the second author with Federico Zerbini, which discusses the Kreuzer--Skarke theorem and gives a way to count invertible potentials for any number of variables.

\subsection{Orbifold equivalence}

\quad We will work in the graded ring of polynomials over the complex numbers, $\mathbb{C} \left[ x_1,\ldots,x_n \right]$, with degrees $\vert x_i \vert \in \mathbb{Q}_{\geq 0}$ associated to each variable $x_i$.

\begin{defn}
A \textit{potential} is a polynomial $W \in \mathbb{C} \left[ x_1,\ldots,x_n \right]$ satisfying $$\mathrm{dim}_{\mathbb{C}} \left( \frac{\mathbb{C} \left[ x_1,\ldots,x_n \right]}{\langle \partial_1 W,\ldots,\partial_n W \rangle} \right) < \infty .$$ 
We say that a potential is \textit{homogeneous of degree $d\in\mathbb{Q}_{\geq 0}$} if in addition it satisfies  $$W \left( \lambda^{\vert x_1 \vert} x_1,\ldots,\lambda^{\vert x_n \vert} x_n \right)=\lambda^d W \left( x_1,\ldots,x_n \right)$$ for all $\lambda \in \mathbb{C}^{\times}$.
\end{defn}

From now on, the word potential will be used to mean `homogeneous potential of degree 2'. 

We will denote the set of all possible potentials with complex coefficients, and any number of variables, by $\mathcal{P}_\mathbb{C}$. To a potential $W \in \mathcal{P}_\mathbb{C}$ with $n$ variables, we can associate a number called the \textit{central charge}, which is defined as: $$c_W=\sum\limits_{i=1}^n \left( 1-\vert x_i \vert \right).$$

\begin{defn}
\begin{itemize}
\item A \textit{matrix factorization} of $W$ consists of a pair $\left( M, d^M \right)$ where
\begin{itemize}
\item $M$ is a $\mathbb{Z}_2$-graded free module over $\mathbb{C} \left[ x_1,\ldots,x_n \right]$;
\item $d^M: M \rightarrow M$ is a degree 1 $\mathbb{C} \left[ x_1,\ldots,x_n \right]$--linear endomorphism (the \textit{twisted differential}) such that:
\begin{equation}
d^M \circ d^M=W.\mathrm{id}_M.
\label{twistdiff}
\end{equation}
\end{itemize}
We may display the $\mathbb{Z}_2$-grading explicitly as $M=M_0 \oplus M_1$ and $d^M=\left( \begin{matrix} 0 & d_1^M \\ d_0^ M & 0 \end{matrix} \right)$.
If there is no risk of confusion, we will denote $\left( M,d^M \right)$ simply by $M$.

\item We call $M$ a \textit{graded matrix factorization} if, in addition, $M_0$ and $M_1$ are $\mathbb{Q}$-graded,  acting with $x_i$ is an endomorphism of degree $\vert x_i \vert$ with respect to the $\mathbb{Q}$-grading on $M$, and the twisted differential has degree 1 with respect to the $\mathbb{Q}$--grading on $M$. Note that these conditions imply that $W$ has degree 2 (as desired).
\end{itemize}
\end{defn}

 We will denote by $\mathrm{hmf}^{\mathrm{gr}} \left( W \right)$ the idempotent complete full subcategory of graded finite--rank matrix factorizations: its objects are homotopy equivalent to direct summands of finite--rank matrix factorizations. 
The morphisms are homogeneous even linear maps up to homotopy with respect to the twisted differential. This category is indeed monoidal and has duals and adjunctions which can be described in a very explicit way. This leads to the following result which gives precise formulas for the left and right quantum dimensions of a matrix factorization.

\begin{prop}{\cite{CM,rigidity}}
Let $V \left( x_1,\ldots,x_m \right)$ and $W \left( y_1,\ldots,y_n \right) \in \mathcal{P}_{\mathbb{C}}$ be two potentials and $M$ a matrix factorization of $W-V$. Then the left quantum dimension of $M$
is: $$\mathrm{qdim}_l \left( M \right)=\left( -1 \right)^{\binom{m+1}{2}} \mathrm{Res} \left[ \frac{\mathrm{str} \left( \partial_{x_1} d^M \ldots \partial_{x_m} d^M \partial_{y_1} d^M \ldots \partial_{y_n} d^M \right) d y_1 \ldots d y_n}{\partial_{y_1} W,\ldots,\partial_{y_n} W} \right]$$
and the right quantum dimension is:
$$\mathrm{qdim}_r \left( M \right)=\left( -1 \right)^{\binom{n+1}{2}} \mathrm{Res} \left[ \frac{\mathrm{str} \left( \partial_{x_1} d^M \ldots \partial_{x_m} d^M \partial_{y_1} d^M \ldots \partial_{y_n} d^M \right) d x_1 \ldots d x_m}{\partial_{x_1} V,\ldots,\partial_{x_m} V} \right].$$
\label{qdims}
\end{prop}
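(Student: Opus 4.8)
The plan is to deduce both formulas from the bicategorical structure of Landau--Ginzburg models established in \cite{CM,rigidity}, whose objects are potentials, whose $1$-morphisms $V \to W$ are finite-rank graded matrix factorizations of $W-V$, and whose $2$-morphisms are even morphisms up to homotopy. In this bicategory $M$ is a $1$-morphism $V \to W$, and the key structural input from \cite{CM} is that it admits both a left and a right adjoint $M^\dagger \colon W \to V$, whose underlying module is the graded dual of $M$ with an appropriate degree shift and whose differential is the transpose of $d^M$. What makes the computation possible is that the unit and counit $2$-morphisms witnessing the two adjunctions $M \dashv M^\dagger$ and $M^\dagger \dashv M$ are written down in closed form in terms of the partial derivatives $\partial_{x_i} d^M$ and $\partial_{y_j} d^M$.

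With this in hand I would recall that the two quantum dimensions are, by definition, the two composites of a unit with a counit of these adjunctions, giving even $2$-endomorphisms
\begin{align*}
\mathrm{qdim}_l(M) &\in \mathrm{End}(I_W), \\
\mathrm{qdim}_r(M) &\in \mathrm{End}(I_V),
\end{align*}
of the identity defects of the target and source respectively. The first substantive step is to compute these composites explicitly. This amounts to forming the tensor product of $M$ with $M^\dagger$, that is, composing the two matrix factorizations over the diagonal, and then feeding in the explicit unit and counit. Since the naive tensor product is only a matrix factorization up to a contractible summand, I would pass to the finite-rank model by homological perturbation exactly as in \cite{CM}; the outcome is that each composite, regarded as a $2$-endomorphism of $I_W$ (respectively $I_V$), is represented by the single supertrace $\mathrm{str}\bigl(\partial_{x_1} d^M \cdots \partial_{x_m} d^M \, \partial_{y_1} d^M \cdots \partial_{y_n} d^M\bigr)$.

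The final step is to evaluate this endomorphism of the identity defect. Here I would invoke the trace pairing on $\mathrm{End}(I_W)$ from \cite{CM,rigidity}, which is implemented by the Grothendieck residue $\mathrm{Res}\bigl[\,-\, dy_1 \cdots dy_n / (\partial_{y_1} W, \ldots, \partial_{y_n} W)\bigr]$ against the Jacobian ideal of $W$. Applying it to the supertrace above reproduces exactly the stated formula for $\mathrm{qdim}_l(M)$, and the symmetric computation over the $x$-variables, using the residue against $(\partial_{x_1} V, \ldots, \partial_{x_m} V)$, yields $\mathrm{qdim}_r(M)$.

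I expect the genuine difficulty to lie not in the residue step but in the sign bookkeeping that produces the prefactors $(-1)^{\binom{m+1}{2}}$ and $(-1)^{\binom{n+1}{2}}$. These arise from two sources which must be tracked simultaneously: the Koszul signs generated when the odd operators $\partial_{x_i} d^M$ and $\partial_{y_j} d^M$ are commuted past one another while assembling the unit and counit, and the reordering of the forms $dy_j$ (respectively $dx_i$) needed to present the residue in the stated order. Showing that these collapse uniformly to a single binomial sign, independently of the number of variables, is the main obstacle, and it is precisely where the orientation and ordering conventions of \cite{CM,rigidity} are indispensable.
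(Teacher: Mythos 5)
The paper gives no proof of this proposition at all: it is quoted directly from \cite{CM,rigidity}, and your outline is a faithful reconstruction of the argument in those references (quantum dimensions as unit--counit composites of the adjunctions $M \dashv M^\dagger$ and $M^\dagger \dashv M$, explicit adjunction data built from the operators $\partial_{x_i} d^M$ and $\partial_{y_j} d^M$ via homological perturbation, and evaluation of the resulting endomorphism of the identity defect by the Grothendieck residue pairing on the Jacobian algebra). Your approach is therefore essentially the same as the one the paper relies on, with the same caveat that the sign bookkeeping producing $(-1)^{\binom{m+1}{2}}$ and $(-1)^{\binom{n+1}{2}}$ is deferred to the conventions of \cite{CM,rigidity} rather than carried out explicitly.
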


Quantum dimensions allow us to define the following equivalence relation:
\begin{defnthm}{\cite{CR,CRCR}}
Let $V$, $W$ and $M$ be as in the previous proposition. We say that $V$ and $W$ are \textit{orbifold equivalent} ($V \sim_{\mathrm{orb}} W$) if there exists a finite--rank matrix factorization of $V-W$ for which the left and the right quantum dimensions are non-zero. Orbifold equivalence is an equivalence relation in $\mathcal{P}_\mathbb{C}$.
\end{defnthm}

\begin{rem}{\cite[Proposition 6.4]{CR} (or \cite[Proposition 1.3]{CRCR}}) If two potentials $V$ and $W$ are orbifold equivalent, then their associated central charges are equal: $c_V=c_W$.
\label{charges}
\end{rem}

Let us give some comments on quantum dimensions and orbifold equivalences \cite{CRCR,CR}:
\begin{itemize}
\item \cite[Lemma 2.5]{CRCR} The quantum dimensions of graded matrix factorizations take values in $\mathbb{C}$. One can see this by counting degrees in the formulas given in Proposition \ref{qdims}.
\item The definitions of the quantum dimensions are also valid for ungraded matrix factorizations (in which case they will take values in $\mathbb{C} \left[ x_1,\ldots,x_n \right]$ instead of in $\mathbb{C}$). Furthermore, the quantum dimensions are independent of the $\mathbb{Q}$-grading on a graded 
matrix factorization.
\item So far, the difficulty of establishing an orbifold equivalence lies in constructing the explicit matrix factorization which proves it.

\end{itemize}

\subsection{Motivation: an interlude on Arnold's strange duality}

From now on, we fix the number of variables of our polynomial ring to be $n=3$.

\quad The aim of this work was to discover more orbifold equivalent potentials as in \cite{CRCR}. 
In that paper, orbifold equivalence between simple singularities was proven. These singularities have modality zero and fall into an ADE classification. A natural next step for finding new orbifold equivalences is to focus on potentials described by singularities of modality one. Thanks to the classification performed by Arnold in the late 60's, we know that such singularities fall into 3 families of parabolic singularities, a three-suffix series of hyperbolic singularities, and 14 families of exceptional singularities. For more details on this classification, we refer to \cite{Ar,AGV}.

\quad A singularity can be described with a regular weight system \cite{Sai}, that is, a quadruple of positive integers $\left( a_1, a_2,a_3; h \right)$ with:
\begin{itemize}
\item[--] $a_1, a_2, a_3 < h$,
\item[--] $\mathrm{gcd} \left( a_1,a_2,a_3 \right)=1$, and
\item[--] There exists a polynomial $W \in \mathbb{C} \left[ x_1,x_2,x_3 \right]$  that has an isolated singularity at the origin (with the degrees of the variables $x_i$ being $\vert x_i \vert=\frac{2 a_i}{h}$, $i \in \lbrace 1,2,3 \rbrace$) which is invariant under the Euler field $E$, that is, $$E.W=\left( \frac{a_1}{h} x_1 \frac{\partial}{\partial x_1}+\frac{a_2}{h} x_2 \frac{\partial}{\partial x_2}+\frac{a_3}{h} x_3 \frac{\partial}{\partial x_3} \right) W=W.$$ 

In other words, the polynomial associated to a regular weight system must be a potential invariant under the Euler field.
\end{itemize}

With the assignment of degrees made, this is the same as requesting homogeneity of degree 2 for the potentials \footnote{This argument goes as follows: a potential in three variables can only have seven possible shapes, which are specified in a graphical way in Table \ref{3vars} in the Appendix \ref{appendix}, or in \cite[Chapter 13]{AGV}. Imposing invariance under the Euler field boils down to some conditions on the powers of the monomials in the potential. With the assignment of degrees made, one can easily see that these conditions are exactly the same as those we should impose if we want homogeneity of degree 2.}.
The integer $h$ is called the \textit{Coxeter number}.

From now on, we write $x_1=x$, $x_2=y$ and $x_3=z$. Some examples of regular weight systems, those corresponding to each of the 14 unimodular exceptional singularities are shown in Table \ref{Excepts1}. 
The associated potentials are also described. For most of the exceptional unimodular singularities, there is only one way to write the associated potential, whereas there are two expressions for each of $Q_{12}$, $Z_{13}$, $W_{12}$, $W_{13}$ and $K_{14}$. Exceptionally, there are 4 potentials 
which can describe the singularity $U_{12}$. In order to find these potentials, combine invariance under the Euler field (or homogeneity of order 2) with the Kreuzer--Skarke theorem \cite{KS} to see that any variable $x_i$ shows up in a potential only as a power of itself, $x_i^a$ (for some $a>2$) or as $x_i^a x_j$ (with $i \neq j$). \footnote{A complete statement of this theorem, as well as a discussion of it, is presented in the Appendix \ref{appendix}.}.

Let us illustrate this with an example: take $K_{14}$. The degrees assigned to the variables are: $\vert x \vert=\frac{6}{24}=\frac{1}{4}$, $\vert y \vert=\frac{16}{24}=\frac{2}{3}$ and $\vert z \vert=\frac{24}{24}=1$. Imposing homogeneity of degree 2, we need to find monomials of the shape $x^{k_1} y^{k_2} z^{k_3}$ where $k_i \in \mathbb{Z}_+$, $i \in \lbrace 1,2,3 \rbrace$ must satisfy $\frac{2}{3} k_1+\frac{1}{4} k_2+k_3=2$. The only solutions are four tuples: $\left( 8,0,0 \right)$, $\left( 4,0,1 \right)$, $\left( 0,3,0 \right)$, $\left( 0,0,2 \right)$, i.e. the monomials $x^4 z$, $x^8$, $y^3$ and $z^2$. Combining them and taking into account the Kreuzer-Skarke theorem, we get the two potentials appearing in Table \ref{Excepts1}.

\begin{table}
\begin{center}
\begin{tabular}{c|c|c|c}
Type & Potential (1) & Potential (2) & $\left( a_1, a_2,a_3; h \right)$ \\ \hline
$Q_{10}$ & $x^4+y^3+x z^2$ & -- & $\left( 9,8,6;24 \right)$ \\
$Q_{11}$ & $x^3 y+y^3+x z^2$ & -- & $\left( 7,6,4;18 \right)$ \\
$Q_{12}$ & $x^3 z+y^3+x z^2$ & $x^5+y^3+x z^2$ & $\left( 6,5,3;15 \right)$ \\
$S_{11}$ & $x^4+y^2 z+x z^2$ & -- & $\left( 5,4,6;16 \right)$ \\
$S_{12}$ & $x^3 y+y^2 z+x z^2$ & -- & $\left( 4,3,5;13 \right)$ \\
$U_{12}$ & $x^4+y^3+z^3$ & $x^4+y^3+z^2 y$& $\left( 4,4,3;12 \right)$ \\
$Z_{11}$ & $x^5+x y^3+z^2$ & -- & $\left( 8,6,15;30 \right)$ \\
$Z_{12}$ & $y x^4+x y^3+z^2$ & -- & $\left( 6,4,11;22 \right)$ \\
$Z_{13}$ & $x^3 z+x y^3+z^2$ & $x^6+y^3 x+z^2$ & $\left( 5,3,9;18 \right)$ \\
$W_{12}$ & $x^5+y^2 z+z^2$ & $x^5+y^4+z^2$ & $\left( 5,4,10;20 \right)$ \\
$W_{13}$ & $y x^4+y^2 z+z^2$ & $x^4 y+y^4+z^2$ & $\left( 4,3,8;16 \right)$ \\
$K_{12}$ & $x^7+y^3+z^2$ & -- & $\left( 14,6,21;42 \right)$ \\
$K_{13}$ & $y^3+y x^5+z^2$ & -- & $\left( 10,4,15;30 \right)$ \\
$K_{14}$ & $x^4 z+y^3+z^2$ & $x^8+y^3+z^2$ & $\left( 8,3,12;24 \right)$ \\ \hline
\end{tabular}
\caption{Unimodular singularities of exceptional type (note that $U_{12}$ can also be described in two additional different ways: $x^4+y^2 z+z^3$ and $x^4+y^2 z+z^2 y$).}
\label{Excepts1}
\end{center} 
\end{table}

\quad As discovered by Kobayashi \cite{Kob}, there is some duality between these weight systems -- which corresponds to what is known as \textit{Arnold's strange duality}\footnote{This duality roughly states that, given two singularities, the Dolgachev numbers associated to the first singularity are the same as the Gabrielov numbers of the second one (and vice versa). We refer to the bibliography for further details, e.g. \cite{Ar,Dol,Eb}.}. Four pairs of these exceptional singularities share the same Coxeter number: $Q_{10}$ and $K_{14}$ ($h=24$), $Q_{11}$ and $Z_{13}$ ($h=18$), $S_{11}$ and $W_{13}$ ($h=16$) and $Z_{11}$ and $K_{13}$ ($h=30$).

\quad In addition, one notices the following phenomenon. For potentials described by strange dual pairs, the associated central charges have a close relationship with the Coxeter number $h$ \cite{Ma2}, $$c_W=\frac{h+2}{h}$$ which implies that the potentials related to strange dual singularities have the same central charge. As mentioned in Remark \ref{charges}, equality of central charges is one consequence of orbifold equivalence between two potentials. Hence, it makes sense to conjecture from the mathematics point of view that strangely dual exceptional unimodular singularities are orbifold equivalent.

\quad Another
consequence of orbifold equivalence between strangely dual exceptional unimodular singularities would be that the Ginzburg algebras \cite{Ginz} for these singularities with Dynkin diagrams \cite{Gab} sharing the same Coxeter number are orbifold equivalent in the bicategory whose objects are smooth dg algebras with finite dimensional cohomology and whose morphism categories are the respective perfect derived categories. We refer to the recent paper \cite{CQ} for a complete exposition and details of this statement.

\quad Furthermore, from the physics point of view, we have known for some time that for each of these exceptional singularities there is a uniform construction of a $K3$ surface obtained by compactifying the singularity \cite{Sai,Pin}. Landau-Ginzburg models with potentials described by strangely dual singularities correspond to the same $K3$ surface \cite{Ma1,Ma2}. This can also be regarded as well 
as a prediction of orbifold equivalence between these singularities. In addition, it would be interesting to see the implications of orbifold equivalence for $N=2$ superconformal four--dimensional gauge theories \cite{CDZ}.

\quad A further motivation for this work (if not the primary for the second author) 
is given by the so-called Landau-Ginzburg/conformal field theory correspondence \cite{howewest,lerchevafawarner,vafawarner,ARC}, which predicts a certain relation between categories of matrix factorizations of the potential of the Landau-Ginzburg model and categories of representations of the vertex operator algebra associated to some conformal field theory. An immediate consequence of orbifold equivalence between two potentials is the following result:

\begin{prop}{\cite{CR}}
Let $V$, $W \in \mathcal{P}_\mathbb{C}$ be two potentials which are orbifold equivalent and let $M \in \mathrm{hmf}^{\mathrm{gr}} \left( W-V \right)$ have non-zero quantum dimensions. Then,
\begin{equation}
\mathrm{hmf}^\mathrm{{gr}} \left( W \right) \simeq \mathrm{mod} \left( X^ \dagger \otimes X \right)
\label{equivalence}
\nonumber
\end{equation}
where by $X^\dagger$ we mean the right adjoint of $X$ and $\mathrm{mod} \left( X^ \dagger \otimes X \right)$ is the category of modules over $X^ \dagger \otimes X$.
\end{prop}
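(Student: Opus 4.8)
The plan is to deduce the statement from the general theory of \emph{orbifold completion} of a bicategory with adjoints, in the form developed in \cite{CR}, specialised to the Landau--Ginzburg bicategory $\mathcal{LG}^{\mathrm{gr}}$. Recall that in $\mathcal{LG}^{\mathrm{gr}}$ the objects are potentials, the category of $1$-morphisms from $V$ to $W$ is $\mathrm{hmf}^{\mathrm{gr}}(W-V)$ (with horizontal composition the tensor product over the intermediate polynomial ring), and $2$-morphisms are even morphisms up to homotopy. Writing $\mathbf{0}$ for the empty potential (zero variables), one has $\mathrm{hmf}^{\mathrm{gr}}(W)=\mathrm{Hom}_{\mathcal{LG}^{\mathrm{gr}}}(\mathbf{0},W)$. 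Thus I would first reinterpret $M$ as a $1$-morphism $X\colon V\to W$, so that the right adjoint $X^\dagger\colon W\to V$ --- which exists by the explicit adjunctions recalled above --- makes $A:=X^\dagger\otimes X$ an endo-$1$-morphism of $V$, i.e. an object of the monoidal category $\mathrm{End}(V)=\mathrm{hmf}^{\mathrm{gr}}(V-V)$.

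Next I would equip $A$ with the structure of a \emph{separable symmetric Frobenius algebra} in $\mathrm{End}(V)$. The unit $I_V\to A$ and multiplication $A\otimes A\to A$ are the usual monad data of the adjunction $X\dashv X^\dagger$, built from its coevaluation and evaluation. The comultiplication and counit come from the second adjunction exhibiting $X$ as a right adjoint of $X^\dagger$; in $\mathcal{LG}^{\mathrm{gr}}$ the left and right adjoints of a $1$-morphism are canonically related (pivotality), so the two families of (co)evaluations fit together into a Frobenius structure by the snake identities, and symmetry follows from the same pivotal structure. The decisive point is \emph{separability}: the composite $\mu\circ\Delta\colon A\to A$ equals the \emph{bubble} endomorphism obtained by evaluating $X$ against $X^\dagger$, which by Proposition~\ref{qdims} is multiplication by the quantum dimension of $X$. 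Since $M=X$ has non-zero quantum dimensions by hypothesis, this scalar is invertible, and rescaling $\Delta$ by its inverse yields $\mu\circ\Delta=\mathrm{id}_A$. This is the one step where the hypothesis is genuinely used.

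With $A$ a separable symmetric Frobenius algebra, I would realise the equivalence through the adjoint pair of functors obtained by composing with $X$ and $X^\dagger$ at the level of $\mathrm{Hom}(\mathbf{0},-)$: the functor $X^\dagger\otimes(-)\colon \mathrm{hmf}^{\mathrm{gr}}(W)\to\mathrm{hmf}^{\mathrm{gr}}(V)$ lands in $A$-modules (the action being induced by the evaluation $X\otimes X^\dagger\to I_W$), giving a comparison functor $K\colon\mathrm{hmf}^{\mathrm{gr}}(W)\to\mathrm{mod}(A)$, while the relative tensor product $X\otimes_A(-)$ provides a candidate quasi-inverse. To conclude I would invoke Beck's monadicity theorem for the monad $A\otimes(-)$ on $\mathrm{hmf}^{\mathrm{gr}}(V)$, whose Eilenberg--Moore category is precisely $\mathrm{mod}(A)$: separability makes every $A$-action split, so the coequalizers computing $X\otimes_A(-)$ are absolute, exist in the idempotent-complete category $\mathrm{hmf}^{\mathrm{gr}}(V)$, and are automatically preserved, while non-vanishing of the second quantum dimension (that of $X^\dagger$, equivalently invertibility of the corresponding bubble on $X\otimes X^\dagger$) exhibits $I_W$ as a retract of $X\otimes X^\dagger$ and hence makes $X^\dagger\otimes(-)$ conservative.

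I expect the main obstacle to be the verification that the comparison functor is essentially surjective and fully faithful, equivalently that the counit $X\otimes_A X^\dagger\to I_W$ of the induced adjunction is an isomorphism. This is the reconstruction step: one must show that the unit $1$-morphism $I_W$ is recovered as a relative tensor product, which relies both on the splitting provided by separability and on idempotent-completeness of $\mathrm{hmf}^{\mathrm{gr}}$ (so that the relevant retracts are genuine objects), together with the non-vanishing of the remaining quantum dimension. The Frobenius and symmetry axioms, and the routine naturality checks for $K$ and its inverse, I would relegate to diagrammatic (snake-identity) computations in the graphical calculus for $\mathcal{LG}^{\mathrm{gr}}$, since they are formal consequences of the adjunction data once separability is in place.
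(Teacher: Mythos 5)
This proposition appears in the paper \emph{without proof}: it is imported directly from \cite{CR}, so the only meaningful comparison is with the argument in that source, and your proposal is essentially that argument. You correctly interpret $M$ as a $1$-morphism $X\colon V\to W$ in the Landau--Ginzburg bicategory, endow $A=X^\dagger\otimes X$ with the monad-plus-comonad (Frobenius) structure coming from the two adjunctions, extract separability from the fact that $\mu\circ\Delta$ is multiplication by a quantum dimension that the hypothesis makes invertible, and realise the equivalence through $X^\dagger\otimes(-)$ and the relative tensor product $X\otimes_A(-)$; in \cite{CR} this last step is packaged as the statement that $X$ becomes an equivalence $(V,A)\to (W,I_W)$ in the orbifold completion, with $X^\dagger$ as inverse, rather than via Beck monadicity, but the mathematical content (split/absolute coequalizers from separability, idempotent completeness of $\mathrm{hmf}^{\mathrm{gr}}$, and $X\otimes_A X^\dagger\cong I_W$) is the same. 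One minor imprecision in your last two paragraphs: the scalar giving separability and the scalar exhibiting $I_W$ as a retract of $X\otimes X^\dagger$ are the \emph{same} bubble, namely the quantum dimension living in $\mathrm{End}(I_W)$ (the composite $I_W\to X\otimes X^\dagger\to I_W$ of the relevant coevaluation and evaluation), not two independent ones; the genuinely independent scalar is the bubble in $\mathrm{End}(I_V)$, which is what makes $X^\dagger$ an orbifold equivalence in the opposite direction, and since the hypothesis grants that both quantum dimensions are non-zero, this mislabelling does not affect the validity of your argument.
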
 
$X^ \dagger \otimes X$ is
a separable symmetric Frobenius algebra \cite{CR} (see e.g. \cite{BCP} for a review on Frobenius algebras). These algebras are related to full CFTs \cite{tft1}. Hence, proving orbifold equivalences is a way to match together both sides of the Landau-Ginzburg/conformal field theory correspondence, providing a better understanding of a mathematical conjecture for it. Due to the need for computational software improvements, we postpone the analysis of the results of this paper from the point of view of the Landau-Ginzburg/conformal field theory correspondence to later works \cite{NRC}.

Proving more orbifold equivalences requires at this point some strong computational tool which for the moment we lack \footnote{Upon the writing of this manuscript, the second author became aware of a project by Andreas Recknagel et al. to create a computer algorithm to prove orbifold equivalences. We do not know any further details about this project, but it seems that this algorithm was able to reproduce the orbifold equivalences of \cite{CRCR} and the one in this paper as well - apparently via a different method but nonetheless pretty simultaneously.}. For this reason we focus on a first example -- that of $K_{14}-Q_{10}$ -- and analyze it in detail.

This paper is organized as follows. In Section 2, we explain orbifold equivalence as well as some basics on matrix factorizations. In Section 3, we describe the method followed to find the matrix factorizations of $K_{14}-Q_{10}$ 
which prove 
orbifold equivalence in two different fashions. In Section 4, we describe the Galois orbits on which the matrix factorizations obtained in Section 3 depend. We wrap up with some conclusions and an appendix by the second author and Federico Zerbini on the Kreuzer--Skarke theorem.

\subsection*{Acknowledgments}

The authors are grateful to the Max-Planck-Institut f{\"u}r Mathematik in Bonn (Germany) for providing the best possible working conditions for this collaboration.
In addition, ARC wishes to thank
Ingo Runkel,
Nils Carqueville, 
Atsushi Takahashi and Lev Borisov for very useful discussions and feedback on this paper. Especial thanks are for all of the subsets of the set $\lbrace$Sonny John Moore, Thomas Wesley Pentz$\rbrace$ and the pair (Dylan Mamid, Zach Rapp-Rovan), which provided an awesome soundtrack to this work.

\section{$\mathbf{Q_{10} \sim_{\mathrm{orb}} K_{14}}$ in two fashions}

Our method to find matrix factorizations of finite rank consists of a variation of the perturbation method used in \cite{CRCR}. The starting point is the paper \cite{KST}, 
where we find the full strongly exceptional collection of objects of the category of matrix factorizations of each potential described by unimodular singularities. Our recipe proceeds as follows:

\begin{enumerate}
\item Consider the difference between two potentials. Set to zero one of the variables (typically the one with the smallest degree associated). Factorize the resulting potential.
\item Pick one of the exceptional objects from the \cite{KST} collection for the potential which doesn't contain the variable set to zero in the previous step. The entries of these matrices are factorizations of each of the monomials of the corresponding potential. We change these factorizations in order to obtain entries in the matrix similar to the factors in the factorization of Step 1, being careful to ensure that the result is still a matrix factorization.
\item Perturb {\`a} la \cite{CRCR} all possible entries of the matrix factorization (not necessarily only with respect to the variable set to zero), except for the zero entries.
\item Impose Equation \ref{twistdiff} and reduce the system of equations obtained from the perturbation constants as much as possible. We obtain a matrix factorization depending on a small number of parameters satisfying some equations.
\end{enumerate}

In an attempt to elucidate this recipe, we will explain in detail how to prove $Q_{10} \sim_{\mathrm{orb}} K_{14}$ in two ways.

\subsection{$\mathbf{Q_{10} \sim_{\mathrm{orb}} K_{14}}$, version 1}\label{version1}

\begin{enumerate}
\item Consider the potentials:
$$Q_{10}=x^4+y^3+x z^2$$
$$K_{14}=u^4 w+v^3+w^2$$
whose variables have the following associated degrees:
\begin{equation}
\vert x \vert=\frac{6}{12} \quad \quad \vert y \vert=\frac{8}{12} \quad \quad \vert x \vert=\frac{9}{12} \quad \quad \vert u \vert=\frac{3}{12} \quad \quad \vert v \vert=\frac{8}{12} \quad \quad \vert w \vert=\frac{12}{12}. 
\nonumber
\end{equation}
It is easy to check that both potentials have a central charge of $c_{Q_{10}}=\frac{13}{12}=c_{K_{14}}.$
The variable with the smallest degree is $u$ and we will perturb with respect to it. Set $u$ equal to zero; the resulting potential is then:
$$\overline{Q_{10}-K_{14}}=x^4+y^3+x z^2-v^3-w^2$$
We can factorize this potential as:
\begin{equation}
\overline{Q_{10}-K_{14}}=\left( x^2+w \right) \left( x^2-w \right)+\left(y-v \right) \left( y^2+y v +v^2\right) + \left( x z \right) \left( z \right).
\label{factorization}
\end{equation}
\item First, we will start from the indecomposables of $Q_{10}$. The matrix factorization associated to the vertex $V_0$ of the Auslander-Reiten quiver associated to this singularity is given by (\cite{KST}):
\begin{equation}
d_0=\left( \begin{matrix}
x z & y^2 & x^3 & 0 \\ y & -z & 0 & x^3 \\ x & 0 & -z & -y^2 \\ 0 & x & -y & x z 
\end{matrix} \right) \quad \quad d_1=\left( \begin{matrix}
z & y^2 & x^3 & 0 \\ y &- x z & 0 & x^3 \\ x & 0 & -x z & -y^2 \\ 0 & x & -y & z
\end{matrix} \right)
\nonumber
\end{equation}
Note that the determinant of $d_1$ is precisely $Q_{10}^2$. Then, similarly to the procedure followed to prove the orbifold equivalence $A_{29} \sim_{\mathrm{orb}} E_8$ in \cite{CRCR}, we make the ansatz that it is possible to recover $d_0$ as $Q_{10} d_1^{-1}$. Hence we will only need to work with $d_1$.
Modify $d_1$ as follows:
$$\widetilde{d_1}=\left( \begin{matrix}
z & y^2 & x^2 & 0 \\ y &- x z & 0 & x^2 \\ x^2 & 0 & -x z & -y^2 \\ 0 & x^2 & -y & z
\end{matrix} \right)$$
The determinant of this matrix is still equal to $Q_{10}^2$. Then, using the factorization in Eq. \ref{factorization}, we can construct a similar $d_1$ whose determinant is precisely $\left( \overline{Q_{10}-K_{14}} \right)^2$:
\begin{equation}
\widetilde{\widetilde{d_1}}=\left( \begin{matrix}
z & v^2+v y+y^2 & x^2+w & 0 \\ y-v &- x z & 0 & x^2+w \\ x^2-w & 0 & -x z & -\left(v^2+y v+ y^2 \right) \\ 0 & x^2-w & -y+v & z
\end{matrix} \right)
\label{pertfact}
\nonumber
\end{equation}
which has a degree distribution (in units of 1/12) specified in Table \ref{degreestable}.
\begin{table}
\begin{center}
\begin{tabular}{c|c|c|c}
9 & 16 & 12 &0 \\ \hline
8 & 15 & 0 & 12 \\ \hline
12 & 0 & 15 & 16 \\ \hline
0 & 12 & 8 & 9
\end{tabular}
\caption{Degree distribution of the entries of $\tilde{\tilde{d_1}}$}
\label{degreestable}
\end{center}
\end{table}

From this matrix, construct $\widetilde{\widetilde{d_0}}$:
\begin{equation}
\widetilde{\widetilde{d_0}}=\left( \begin{matrix}
-x z & - \left( v^2+v y+y^2 \right) & -\left( x^2+w \right) & 0 \\ v-y & z & 0 & -\left( x^2+w \right) \\ -\left( x^2-w \right) & 0 & z & v^2+y v+ y^2 \\ 0 & -\left(x^2-w\right) & -v+y & -x z
\end{matrix} \right)
\label{pertfact0}
\nonumber
\end{equation}
which has a degree distribution (in units of 1/12) specified in Table \ref{degreestable0}.
\begin{table}
\begin{center}
\begin{tabular}{c|c|c|c}
15 & 16 & 12 &0 \\ \hline
8 & 9 & 0 & 12 \\ \hline
12 & 0 & 9 & 16 \\ \hline
0 & 12 & 8 & 15
\end{tabular}
\caption{Degree distribution of the entries of $\tilde{\tilde{d_0}}$}
\label{degreestable0}
\end{center}
\end{table}

Now form the whole matrix factorization (which we will denote by $d_X$). Indeed, we see that $d_X \circ d_X=\overline{Q_{10}-K_{14}}$.
\item Perturb all possible entries with terms (at least) linear in $u$. Note that, in contrast to \cite{CRCR}, the zero entries are not perturbed. Those which can be perturbed in this way are those of degree:
\begin{itemize}
\item 9: $u^3$, $u x$.
\item 12: $u z$, $u^4$, $x u^2$.
\item 15: $u x^2$, $u^2 z$, $u w$, $u^5$.
\end{itemize}
\end{enumerate}
Implement the perturbation in $d_X=\widetilde{\widetilde{d_0}} \oplus \widetilde{\widetilde{d_1}}=\left( x_{ij} \right)$ ($i,j=1,\ldots,8$); the entries of this matrix will be 
\begin{equation}
\begin{split}
x_{15} &= z+p_{111} u^3+p_{112} u x \\
x_{16} &= v^2+v y+y^2\\
x_{17} &= x^2+w+ p_{131} u z+p_{132} u^4+p_{133} x u^2\\
x_{25} &= y-v \\
x_{26} &= -xz + p_{221} u x^2 + p_{222} u^2 z + p_{223} u w + p_{224} u^5\\
x_{28} &= w + x^2 + p_{241} u z + p_{242} u^4 + p_{243} x u^2\\
x_{35} &= -w + x^2 + p_{311} u z + p_{312} u^4 + p_{313} x u^2\\
x_{37} &= -x z + p_{331} u x^2 + p_{332} u^2 z + p_{333} u w + p_{334} u^5\\
x_{38} &= -v^2 -v y - y^2 \\
x_{46} &= -w + x^2 + p_{421} u z + p_{422} u^4 + p_{423} x u^2\\
x_{47} &= v - y\\
x_{48} &=  z + p_{441} u^3 + p_{442} u x\\
\end{split}
\nonumber
\end{equation}
for $d_1$, and similarly for $d_0$,
with the rest of entries of the matrix zeros and where $p_{lmn} \in \mathbb{C}$ ($l=1,\ldots,8$; $m,n=1,\ldots,4$). Imposing Equation \ref{twistdiff} and linear conditions on the $p_{ijk}$'s, we finally recover a diagonal matrix where in order to recover the original potential $Q_{10}-K_{14}$ we need to solve a system of 11 equations with 12 variables, 
which can indeed be further reduced. Changing $p_{112} \rightsquigarrow a$, $p_{131} \rightsquigarrow b$ and $p_{221} \rightsquigarrow c$, we are left with only two equations and three variables,

\begin{equation}
\begin{split}
& -\frac{1}{64} \left( -4 + 3 a^4 + 8 a^3 b + 8 a^2 b^2 - 4 a^3 c - 8 a^2 b c \right) \\ & \quad \cdot \left( 4 + 3 a^4 + 8 a^3 b + 8 a^2 b^2 - 4 a^3 c - 8 a^2 b c \right)=0\\ 
& -\frac{1}{8} a^2 \left( a^4 - 8 a^2 b^2 - 16 a b^3 - 8 b^4 + 8 a^2 b c + 24 a b^2 c + 16 b^3 c - 2 a^2 c^2 - 8 a b c^2 - 8 b^2 c^2 \right)=0
\end{split}
\label{Galoisorbit}
\end{equation}

For the sake of simplification, introduce the following notation:
$$\kappa_1:=\left( \frac{a^3}{2} + a^2 b + a b^2 - \frac{a^2 c}{2} - a b c \right)$$
$$\kappa_2:=1 + \frac{3 a^4}{4} + 3 a^3 b + 4 a^2 b^2 + 2 a b^3 - a^3 c - 
      3 a^2 b c - 2 a b^2 c$$
      
The entries of $d_X$ finally look like:
\begin{equation}
\begin{split}
x_{15} &= \kappa_1 u^3 + a u x + z,
\\
x_{16} &= v^2+v y+y^2,\\
x_{17} &= \frac{1}{2} \kappa_2 u^4 + w - \frac{1}{2} a \left(-a - 2 b \right) u^2 x + x^2 + 
   b u z,\\
x_{25} &= y-v, \\
x_{26} &= \left(-b - b^2 \kappa_1+ 
      \frac{1}{2} \left(c-a \right) \kappa_2 \right) u^5 + \left( -a - 2 b + c \right) u w \\ &+ c u x^2 + 
   b \left(-a - b + c \right) u^2 z - x z,\\
x_{35} &= \left( -1 + \left(-a - 2 b + c \right) \kappa_1 +\frac{\kappa_2}{2}  \right) u^4 \\ &- w + 
   \frac{1}{2} a \left(-a - 2 b + 2 c \right) u^2 x + x^2 + \left(-a - b + c \right) u z, 
\end{split}
\nonumber
\end{equation}
with 
\begin{equation}
\begin{split}
x_{15} &=x_{48}=x_{62}=x_{73} \\
x_{16} &=-x_{38}=-x_{52}=x_{74} \\
x_{17} &=x_{28}=-x_{53}=-x_{64} \\
x_{25} &=-x_{47}=-x_{61}=x_{83} \\
x_{26} &=x_{37}=x_{84}=x_{51} \\
x_{35} &=x_{46}=-x_{71}=-x_{82} 
\end{split}
\nonumber
\end{equation}
and with all other entries of the matrix zero.

The quantum dimensions of our matrix factorization are

\begin{equation}
\begin{split}
\mathrm{qdim}_l \left( d_X \right) &= \frac{1}{2} a^2 \left( a + 2 b - c \right) \\
\mathrm{qdim}_r \left( d_X \right) &=-2 \left( a - c \right)
\end{split}
\nonumber
\end{equation}

which are not zero for any values of $a$, $b$, $c$ satisfying Eqs. \ref{Galoisorbit}.

\subsection{$\mathbf{Q_{10} \sim_{\mathrm{orb}} K_{14}}$, version 2}\label{version2}

\begin{enumerate}
\item This time we consider the potentials:
$$Q_{10}=x^4+y^3+x z^2$$
$$K_{14}=u^3 + v^8 + w^2$$
that is, the same $Q_{10}$ but a different $K_{14}$. The variables of the potential $Q_{10}$ have the same associated degree, while $u$ and $v$ of $K_{14}$ switch theirs. This time, we will perturb with respect to $w$ (the variable with the biggest degree). Set it equal to zero, and the resulting potential is:
$$\overline{Q_{10}-K_{14}}=x^4+y^3+x z^2 -u^3 - v^8$$
which has again a factorization similar to that of Eq. \ref{factorization}:
$$\overline{Q_{10}-K_{14}}=\left( x^2 + v^4 \right) \left( x^2 -v^4 \right)+\left( y-u \right) \left( y^2+y u+u^2 \right)+\left( x z \right) \left( z \right)$$

\item Proceeding analogously to \ref{version1}, we get:
$$\tilde{\tilde{d_1}} = \left( \begin{matrix} z & u^2 + u y + y^2 & v^4 + x^2 & 0 \\ -u + y & -x z & 0 & v^4 + x^2 \\ -v^4 + x^2 & 0 & -x z & -u^2 - u y - y^2 \\ 0 & -v^4 + x^2 & u - y & z \end{matrix} \right)$$
whose determinant is precisely $\overline{Q_{10}-K_{14}}^2$. The degrees are distributed in the matrix in the same way as in Table \ref{degreestable}. Again, $\tilde{\tilde{d_0}}$ is given by $\overline{Q_{10}-K_{14}} \tilde{\tilde{d_1}}^{-1}$.

\item In this case, we will allow all possible perturbations -- not only those linear in $w$. The perturbations associated to each degree are then:
\begin{itemize}
\item 9: $v^3$, $v x$.
\item 12: $v z$, $v^2 x$, $w$.
\item 15: $v w$, $v^5$, $v^2 z$, $v^3 x$, $v x^2$.
\end{itemize}
We proceed as in the previous example. We obtain a matrix factorization with entries:
\begin{equation}
\begin{split}
x_{15} &=b v^3 + c v x + z, \\
x_{16} &=u^2 + u y + y^2, \\
x_{17} &=v^4 + a w + \frac{1}{2} \left( c^2 + 2 c d \right) v^2 x + x^2 + d v z,\\
x_{25} &=-u + y,\\ 
x_{26} &= -\frac{2 a v w}{b} + \left( b + \frac{2 c}{b^2} - \frac{2 c d}{b} + c^2 d + 2 c d^2 - \frac{c^2 + 2 c d}{b} \right) v^3 x  \\ &+ \left(-\frac{2}{b} + c + 2 d \right) v x^2 - \frac{2 v^2 z}{b^2} - x z,\\ 
x_{35} &=-v^4 - a w + \left( c \left( -\frac{2}{b} + c + 2 d \right) + \frac{1}{2} \left( -c^2 - 2 c d \right) \right) v^2 x  \\ &+ 
   x^2 + \left(-\frac{2}{b} + d \right) v z,
\end{split}
\nonumber
\end{equation}
and 
\begin{equation}
\begin{split}
x_{15} &=x_{48}=x_{62}=x_{73} \\
x_{16} &=-x_{38}=-x_{52}=x_{74} \\
x_{17} &=x_{28}=x_{53}=-x_{64}=x_{82} \\
x_{25} &=-x_{47}=-x_{61}=x_{83} \\
x_{26} &=x_{37}=x_{26}=x_{51}=x_{84} \\
x_{35} &=x_{46}=-x_{71}
\end{split}
\nonumber
\end{equation}
with the rest of the entries of the matrix factorization being zero.

$a$, $b$, $c$ and $d$ must satisfy:
\begin{equation}
\begin{split}
a^2 &=1 \\
b^2 + \frac{4 c}{b} - c^2 - 4 c d + b c^2 d + 2 b c d^2 &=0 \\ 
-2 + 2 b c + \frac{2 c^2}{b^2} - \frac{c^4}{4} + 2 b d - \frac{2 c^2 d}{b} + c^2 d^2 &=0 \\
\frac{-2}{b^2} + \frac{2 d}{b} - d^2&=0
\end{split}
\label{Galoisorbit2}
\end{equation}
\end{enumerate}

The quantum dimensions of this matrix factorization are:
\begin{equation}
\begin{split}
\mathrm{qdim}_l \left( d_X \right) &= \frac{ 24 a \left(-1 + b c + b d \right)}{b} \\
\mathrm{qdim}_r \left( d_X \right) &=\frac{6 a}{b^2} \left(-3 b^3 - 12 c + 7 b c^2 + 3 b^4 d + 24 b c d - 6 b^2 c^2 d - 18 b^2 c d^2 + 3 b^3 c^2 d^2 + 6 b^3 c d^3 \right)
\end{split}
\nonumber
\end{equation}

which are not zero for any values of $a$, $b$, $c$, $d$ which satisfy Eqs. \ref{Galoisorbit2}.

\section{Galois theory}

In this section, we analyze in detail the solutions of Eqs. \ref{Galoisorbit} and \ref{Galoisorbit2}. These solutions lie in Galois orbits, which are described in the following two propositions.

\begin{prop}
The solutions of Eqs. \ref{Galoisorbit} are permuted by a Galois group isomorphic to $D_8 \times C_2$. Moreover, the solutions comprise three distinct orbits for the Galois action. 

\end{prop}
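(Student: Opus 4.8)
\section*{Proof proposal}

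The plan is to eliminate variables until the system becomes essentially univariate in $a$, read off the splitting field $L$ of the resulting data over $\mathbb{Q}$, identify $\mathrm{Gal}(L/\mathbb{Q})$, and then tabulate its action on the solution set. First I would exploit the already-factored shape of the two equations. Writing $P = 3a^4 + 8a^3b + 8a^2b^2 - 4a^3c - 8a^2bc$, the first equation of Eqs.~\ref{Galoisorbit} is exactly $(P-4)(P+4)=0$, i.e.\ $P=4$ or $P=-4$; this outer sign is one of the binary choices that the Galois action will permute. Since the left quantum dimension $\frac{1}{2}a^2(a+2b-c)$ must be non-zero we have $a\neq0$, so the factor $a^2$ in the second equation may be cancelled and that equation becomes $Q=0$, where $Q$ is the degree-four polynomial in the second bracket.

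The key simplification I would carry out next is to regard $Q=0$ as a quadratic equation in $c$. Completing the square, its discriminant factors as $8\,a^4(a+2b)^2$, a perfect square up to the scalar $8$; hence $\sqrt2$ enters and $c$ becomes a rational function of $a$ and $b$ over $\mathbb{Q}(\sqrt2)$, the two square-root signs giving two branches. Substituting either branch back into $P$ I expect the entire $b$-dependence to cancel, leaving $P = a^4(3\pm2\sqrt2)$, so that the first equation collapses to
\[
a^4(3\pm2\sqrt2)=\pm4, \qquad\text{i.e.}\qquad a^4=\frac{\pm4}{(1\pm\sqrt2)^2},
\]
using $3\pm2\sqrt2=(1\pm\sqrt2)^2$. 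This is the heart of the matter: it shows that the arithmetic is controlled entirely by $a$, that $b$ survives as a free parameter so that the Galois action is genuinely on the \emph{finite} set of branches indexed by the sign choices and the value of $a$, and that solving for $a$ introduces the nested radical $\theta:=\sqrt{\sqrt2-1}$ together with $i$ (from the outer sign).

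Then I would reconstruct the splitting field. By the previous step every branch is defined over the field generated by $\sqrt2$, $\theta$, $i$ and whatever further quadratic is forced by the remaining independent sign in $a^4=\pm(\cdots)$. Here $\theta$ is a root of $x^4+2x^2-1$, which is irreducible over $\mathbb{Q}$ and, by the standard criterion for $x^4+px^2+q$ (with $p=2$, $q=-1$ and $q(p^2-4q)=-8$ both non-squares), has dihedral Galois group of order $8$; adjoining the remaining independent quadratic should produce $L$ with $\mathrm{Gal}(L/\mathbb{Q})\cong D_8\times C_2$. Finally I would let this group act on the branches catalogued above and verify that they fall into exactly three orbits.

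I expect the main obstacle to be the precise determination of $L$ and of $\mathrm{Gal}(L/\mathbb{Q})$. One must check that the radicals $\sqrt2$, $\theta$, $i$ and the extra quadratic are sufficiently independent to yield a group of order $16$, rather than merely the order-$8$ dihedral group generated by $\theta$ and $i$ alone, and that the group is $D_8\times C_2$ rather than another group of that order. In practice I would pin this down by exhibiting a primitive element of $L$ and factoring its minimal polynomial, or by computing the Galois group of the univariate elimination polynomial directly, a task well suited to a computer algebra system. The orbit count then follows from a finite check of how the generators permute the branches; I anticipate orbits of sizes $4$, $8$ and $4$, the smaller orbits corresponding to the branches whose value of $a$ is purely real or purely imaginary.
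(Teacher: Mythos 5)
Your elimination is correct, and it is in substance the same reduction the paper carries out (the paper merely states the outcome, ``solving the equations shows\dots'', where you make the mechanism explicit): the second equation of Eqs.~\ref{Galoisorbit}, read as a quadratic in $c$, has discriminant $8a^4(a+2b)^2$; back-substitution kills the $b$-dependence and gives $P=a^4(3\pm2\sqrt{2})$; and the system collapses to $a^4(3\pm2\sqrt{2})=\pm4$ with $b$ a free parameter. (Incidentally, $a\neq 0$ follows from the first equation alone, since $a=0$ forces $(P-4)(P+4)=-16$; there is no need to invoke the quantum dimensions.) This recovers exactly the paper's three families: $a$ a root of the irreducible octic $x^8+24x^4+16$, or of $x^4-4x^2-4$, or of $x^4+4x^2-4$, and your predicted orbit sizes $8,4,4$ agree with the paper's three orbits.

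The genuine gap is the step you defer to the end: identifying the splitting field and its Galois group, which is the actual content of the proposition. Worse, that step cannot be completed as you envisage, because the ``remaining independent quadratic'' does not exist. From $(\sqrt{2}-1)(\sqrt{2}+1)=1$ one gets $\sqrt{\sqrt{2}+1}=\theta^{-1}$ with $\theta=\sqrt{\sqrt{2}-1}$, and since $a^4=\pm4(3\mp2\sqrt{2})=\pm4\,\theta^{\pm4}$, all sixteen $a$-values have the form $\zeta_8^{\,j}\sqrt{2}\,\theta^{\pm1}$ with $\zeta_8=e^{i\pi/4}$ (odd $j$ for the octic, even $j$ for the two quartics); the $c$-branches require only $\sqrt{2}=\theta^2+1$ in addition. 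Hence every solution is defined over $\mathbb{Q}(i,\theta)$, the splitting field of $x^4+2x^2-1$, which has degree $8$ over $\mathbb{Q}$ and, by the very criterion you quote, dihedral Galois group of order $8$. An honest execution of your plan (or the computer computation you suggest) therefore returns the dihedral group of order $8$ acting with three orbits of sizes $8,4,4$, not a group of order $16$. You should be aware that the same identity affects the proof in the paper itself: the field $L=\mathbb{Q}\left(\sqrt[4]{-3+2\sqrt{2}},\sqrt{1+\sqrt{2}}\right)$ equals $\mathbb{Q}(\zeta_8,\theta)$ and has degree $8$ rather than $16$; indeed $n=\sqrt{1+\sqrt{2}}$ already lies in $\mathbb{Q}(m)$ for $m=\sqrt[4]{-3+2\sqrt{2}}$, so the map $\tau$ fixing $m$ while negating $n$ is not a field automorphism. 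What survives, on either route, is the count of three Galois orbits; the group permuting the solutions is the dihedral group of order $8$.
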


\begin{proof}
Define
\begin{eqnarray*}
f_1&=&4 + 3 a^4 + 8 a^3 b + 8 a^2 b^2 - 4 a^3 c - 8 a^2 b c \\
f_2&=&f_1-8=-4 + 3 a^4 + 8 a^3 b + 8 a^2 b^2 - 4 a^3 c - 8 a^2 b c\\
g&=&a^4 - 8 a^2 b^2 - 16 a b^3 - 8 b^4 + 8 a^2 b c + 24 a b^2 c + 16 b^3 c - 2 a^2 c^2 - 8 a b c^2 - 8 b^2 c^2.\\
\end{eqnarray*} 
Eqns. \ref{Galoisorbit} reduce to $f_1f_2=g=0$. Thus, the solutions to Eqns. \ref{Galoisorbit} come in two disjoint families. Family 1 consists of solutions to $f_1=g=0$, and Family 2 consists of solutions to $f_2=g=0$.

Solving the equations shows that the solutions in Family 1 have $a=i^k\sqrt[4]{-12\pm 8\sqrt{2}}$ for some $k\in\mathbb{Z}/4\mathbb{Z}$, and all eight possibilities for $a$ occur. In other words, $a$ is a root of $x^8+24x^4+16$, which is irreducible over $\mathbb{Q}$. 

Solutions in Family 2 have $a=i^k\sqrt[4]{12\pm 8\sqrt{2}}=i^k\sqrt{2\pm\sqrt{2}}$, for some $k\in\mathbb{Z}/4\mathbb{Z}$, and all eight possibilities for $a$ occur.  in other words $a$ is a root of $$x^8-24x^4+16=(x^4-4x^2-4)(x^4+4x^2-4)=0.$$
The family of solutions with $a$ a root of the irreducible polynomial $x^4-4x^2-4$ will be called Family 2A. The solutions with $a$ a root of the irreducible polynomial $x^4+4x^2-4$ will be called Family 2B.

Every solution $(a,b,c)$ to Eqs. \ref{Galoisorbit} has $a$ defined over $L=\mathbb{Q}(\sqrt[4]{-3+2\sqrt{2}},\sqrt{1+\sqrt{2}})$ and, moreover, the values of $a$ for all solutions of Eqs. \ref{Galoisorbit} generate $L/\mathbb{Q}$. The field $L$ is a degree $16$ Galois extension of $\mathbb{Q}$ whose Galois group is isomorphic to $D_8\times C_2$ and has generators $\rho, \sigma,\tau$ with the following actions on $m=\sqrt[4]{-3+2\sqrt{2}}$ and $n=\sqrt{1+\sqrt{2}}$:
\begin{eqnarray*}
\rho:& m\mapsto im^{-1}, & n\mapsto in^{-1} \\
\sigma:&  m\mapsto m^{-1}, & n\mapsto in^{-1} \\
\tau:& m\mapsto m, & n\mapsto -n.
\end{eqnarray*}
Note that $i=(m^2+m^{-2})/2$, so $\rho$ has order $4$, whereas $\sigma$ and $\tau$ have order $2$. 


The $a$-values of solutions in Family 1 generate $\mathbb{Q}(m)/\mathbb{Q}$, the fixed field of $\tau$. The \mbox{$a$-values} of solutions in Family 2 generate $\mathbb{Q}(i,n)/\mathbb{Q}$, the fixed field of $\tau\rho^2$. Both $\mathbb{Q}(m)/\mathbb{Q}$ and  $\mathbb{Q}(i,n)/\mathbb{Q}$ are Galois extensions with Galois groups isomorphic to $D_8$. 

The solutions (a,b,c) in Family 1 satisfy the equations $a^8+24a^4+16=0$ and \mbox{$16(a+2b)c=32ab+32b^2-12a^2-a^6$.} They make up one Galois orbit.

The solutions (a,b,c) in Family 2A satisfy $a^4-4a^2-4=0$ and $2(a+2b)c=(a+2b)^2+2$. They make up one Galois orbit. The solutions (a,b,c) in Family 2B satisfy $a^4+4a^2-4=0$ and $2(a+2b)c=(a+2b)^2-2$. They make up one Galois orbit. 
\end{proof}



\begin{prop}
The solutions of Eqs. \ref{Galoisorbit2} are permuted by a Galois group isomorphic to $V_4=C_2\times C_2$. The solutions comprise eight orbits for the Galois action, with each orbit having $4$ elements.
\end{prop}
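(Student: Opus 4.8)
The plan is to solve the system (\ref{Galoisorbit2}) essentially by hand, reduce it to a transparent form, and then read off the splitting field and the orbit structure. First I would treat the last equation, which after clearing denominators reads $(bd)^2 - 2(bd) + 2 = 0$; hence $Q := bd$ is one of the two roots $1 \pm i$ of $t^2 - 2t + 2$, and in particular $i \in \mathbb{Q}(Q)$. The point of isolating $Q$ is that the identity $Q^2 - 2Q + 2 = 0$ makes the bulk of the second and third equations collapse: I would substitute $P := bc$ and $S := c^2/b^2$, clear denominators, and check that the combinations $2 - 2Q + Q^2$ appearing as coefficients vanish. This should reduce the second equation to $b^2 = c^2(1-Q)$ (equivalently $S = 1/(1-Q)$) and the third to $c^4 = 8(bc + Q - 1)$, while the first equation just gives $a = \pm 1$.

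With these relations in hand I would eliminate to find the field of definition. From $c^2 = b^2/(1-Q)$ and $1 - Q = \mp i$ one gets $c/b = \pm(1-Q)^{-1/2}$, a square root of $\pm i$; since $\sqrt{i} = \zeta_8$ and $\sqrt{-i} = \zeta_8^{-1}$ lie in $\mathbb{Q}(\zeta_8) = \mathbb{Q}(i,\sqrt2)$, the ratio $c/b$ is already in $\mathbb{Q}(\zeta_8)$. Substituting $c = \pm b(1-Q)^{-1/2}$ into $c^4 = 8(bc + Q-1)$ turns it into a quadratic in $t = b^2$ with coefficients in $\mathbb{Q}(\zeta_8)$, so each admissible $t$ lies in $\mathbb{Q}(\zeta_8)$; explicitly the four values of $t$ for $Q = 1+i$ come out as $2(1+i)(\pm1\pm\sqrt2)$, and their complex conjugates handle $Q = 1-i$.

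The crux, and the step I expect to cause the most trouble, is showing that $b$ itself (not merely $b^2$) lies in $\mathbb{Q}(\zeta_8)$, i.e. that each of these eight values of $t$ is actually a square in $\mathbb{Q}(\zeta_8)$; a priori one would only expect $b$ to live in a quadratic extension of $\mathbb{Q}(\zeta_8)$, which would make the Galois group strictly larger than $V_4$. I would settle this by exhibiting the square roots explicitly: for instance $2(1+i)(1-\sqrt2) = \big((1-\sqrt2) + i\big)^2$ and $-2(1+i)(1+\sqrt2) = \big(-1 + (1+\sqrt2)i\big)^2$, the remaining cases following by applying the automorphism $\sqrt2 \mapsto -\sqrt2$ and complex conjugation. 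Hence every $b$, and then $c = \pm b(1-Q)^{-1/2}$ and $d = Q/b$, lies in $\mathbb{Q}(\zeta_8)$. Conversely $i = bd - 1$ and $\sqrt2 = 1 + i - b$ for a suitable solution show that the coordinates generate all of $\mathbb{Q}(\zeta_8)$, so the splitting field is exactly $\mathbb{Q}(\zeta_8)$, whose Galois group is $(\mathbb{Z}/8)^\times \cong V_4$.

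Finally I would count solutions and verify the orbit sizes. Indexing a solution by the sign $a = \pm1$ together with $b$ (which then determines $c$ and $d$ through the reduced relations) gives $2 \times 16 = 32$ solutions, once one checks the sixteen values of $b$ are distinct and that the sign of $c$ is pinned by $c^4 = 8(bc+Q-1)$. To obtain eight orbits of four it then suffices to show the $V_4$-action is free. Here I would use that the three nontrivial elements of $\mathrm{Gal}(\mathbb{Q}(\zeta_8)/\mathbb{Q})$ have fixed fields $\mathbb{Q}(\sqrt2)$, $\mathbb{Q}(i)$ and $\mathbb{Q}(\sqrt{-2})$: complex conjugation cannot fix a solution because $bd = 1\pm i$ is non-real; the automorphism fixing $\mathbb{Q}(i)$ cannot fix one because $c/b$ is a primitive eighth (not fourth) root of unity; and the automorphism fixing $\mathbb{Q}(\sqrt{-2})$ cannot fix one because $bd = 1 \pm i \notin \mathbb{Q}(\sqrt{-2})$. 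With trivial stabilizers, all orbits have size $|V_4| = 4$, yielding the claimed eight orbits.
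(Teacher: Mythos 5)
Your proof is correct, and its first half coincides with the paper's: both of you set $Q=bd$ (the paper calls it $t$), extract $Q^2-2Q+2=0$ from the last equation, and use it to collapse the remaining equations to $(b/c)^2=1-Q$ together with $c^4=8(bc+Q-1)$ (the paper writes the latter as $c^4-8(b/c)c^2+8(b/c)^2=0$; the two forms agree). From there the routes genuinely diverge. The paper packages the $16$ admissible values of $c$ as the roots of $f(x)=x^{16}+2^7\cdot 17\,x^8+2^{12}$, asserts that $f$ factors into four quartics over $\mathbb{Q}$ and splits completely over $\mathbb{Q}(\zeta_8)$, and reads off the orbit structure from that factorization: one orbit per quartic factor and per sign of $a$, giving $2\times 4=8$ orbits of $4$ elements, orbit size $4$ resting implicitly on the irreducibility of the quartic factors. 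You instead solve for $b^2$ explicitly, obtaining $2(1\pm i)(\pm 1\pm\sqrt{2})$, and settle what you rightly identify as the crux --- that $b$ itself lies in $\mathbb{Q}(\zeta_8)$ --- by exhibiting square roots such as $2(1+i)(1-\sqrt{2})=\bigl((1-\sqrt{2})+i\bigr)^2$ and $2(1+i)(-1-\sqrt{2})=\bigl(-1+(1+\sqrt{2})i\bigr)^2$ (both identities check out, and the remaining cases do follow by the automorphism $\sqrt{2}\mapsto-\sqrt{2}$ and by conjugation); you then count $2\times 16=32$ solutions and obtain eight orbits of four by proving the $V_4$-action is free, via the three quadratic fixed fields $\mathbb{Q}(\sqrt{2})$, $\mathbb{Q}(i)$, $\mathbb{Q}(\sqrt{-2})$ and the facts that $bd=1\pm i$ is non-real, not in $\mathbb{Q}(\sqrt{-2})$, and that $c/b$ is a primitive eighth root of unity. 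Your version is more self-contained: it replaces the paper's unproved assertions (the factorization and splitting of $f$, and the irreducibility of its quartic factors) with computations one can verify by hand, and it nails down a point the paper passes over, namely that the coordinates of the solutions generate all of $\mathbb{Q}(\zeta_8)$ (via $i=\pm(bd-1)$ and $\sqrt{2}=1+i-b$ for the solution with $b=(1-\sqrt{2})+i$), so the Galois group is exactly $V_4$ rather than a proper subgroup. What the paper's route buys in exchange is cleaner bookkeeping of which solutions lie in a common orbit: the orbits are precisely the root sets of the four rational quartic factors of $f$, paired with the sign of $a$.
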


\begin{proof}
The solutions of Eqs. \ref{Galoisorbit2} consist of two families: solutions in Family(+1) have $a=1$, whereas solutions in Family(-1) have $a=-1$. We define a new variable $t$ by $t=bd$. The last equation in Eqs. \ref{Galoisorbit2} becomes \begin{equation}
\label{eq:t}
t^2-2t+2=0
\end{equation} and hence $t=1\pm i$.
Substituting \eqref{eq:t} into the second and third equations in Eqs. \ref{Galoisorbit2} and simplifying gives the following equivalent system of equations.

\begin{equation}
\label{neweqs}
\begin{split}
a^2 &=1 \\
\left(\frac{b}{c}\right)^2  &= 1-t \\ 
c^4-8\left(\frac{b}{c}\right)c^2+8\left(\frac{b}{c}\right)^2 &=0 \\
t^2-2t+2&=0.
\end{split}
\end{equation}
Hence, the solutions only depend on $a,b$ and $c$, and $b/c$ is a primitive $8$th root of unity. The solutions for $c$ are the roots of 
$f(x)=x^{16}+2^7.17x^8+2^{12}$, which decomposes into four quartic polynomials over $\mathbb{Q}$, and splits completely into linear factors over $\mathbb{Q}(\zeta_8)$. Therefore, all values of $c$ are defined over $\mathbb{Q}(\zeta_8)$, which has Galois group $V_4$.
For each value of $c$, there is a unique primitive $8$th root of unity $\beta$ such that $c^4-8\beta c^2+8\beta^2 =0$. In other words, each value of $c$ determines a value of $b/c$, and hence also a value of $t$.

Each family of solutions, Family(+1) and Family(-1), breaks down into four Galois orbits, one for each quartic factor in the decomposition of $f$ over $\mathbb{Q}$. So, in total we have eight Galois orbits, each with four elements corresponding to the four roots of a quartic factor of $f$.
\end{proof}


\begin{rem}
Note the marked differences between the solutions of Eqs.  \ref{Galoisorbit}  and those of Eqs.  \ref{Galoisorbit2}. In particular, there are infinitely many solutions to Eqs.  \ref{Galoisorbit}, whereas Eqs.  \ref{Galoisorbit2} admit precisely 32 solutions.
\end{rem}

The elements in the Galois group interfere with our matrix factorizations in the following way. Let $W \in \mathbb{Q} \left[ x,y,z \right]$ be a potential and let $M$ be a finite--rank matrix factorization of~$W$ given by $\left( \mathbb{C} \left[ x,y,z \right]^{\oplus 2 r}, d^M \right)$ ($r \in \mathbb{N}$). Let $\sigma $ be an element of the Galois group and denote by $\sigma \left( d_M \right)$ the twisted differential obtained by applying $\sigma$ to each entry. Since $\sigma$ leaves the potential invariant, i.e. $\sigma \left( W \right) = W$, $\sigma \left( d_M \right)$ is still a factorization of $W$, $\sigma \left( M \right) = \left( \mathbb{C} \left[ x,y,z \right]^{\oplus 2r}, \sigma \left( d_M \right) \right)$. Therefore, we obtain not only one matrix factorization proving orbifold equivalence between $Q_{10}$ and $K_{14}$, but infinitely many for Eqs. \ref{Galoisorbit} and 32 for Eqs. \ref{Galoisorbit2} -- one for each solution.

\begin{rem}
Note that the two Galois groups we obtain are quite different. $V_4$ is abelian and order 4, whereas $D_8 \times C_2$ is non-abelian and order 16. 
In fact, $V_4$ is a subgroup of $D_8 \times C_2$ -- and actually also of $D_8$ alone. Both matrix factorizations prove the same orbifold equivalence, but the second version has the advantage that the resulting equations are much easier to handle.
\end{rem}

\quad It would certainly be interesting to further explore the connection between Galois groups and matrix factorizations proving orbifold equivalence between potentials described by singularities. That is the aim of the second part of this paper, \cite{NRC}. Some ideas we would like to explore are the following.

We intend to investigate whether it is possible to predict from the outset whether a given expression of a singularity will lead to a Galois group which is easy to handle (e.g. abelian). In the particular example we have dealt with in this paper, in Version 1 the potential for $K_{14}$ had a cross term, whereas in Version 2 (the easier one), the potential for $K_{14}$ had only pure power monomials. But as we have seen in Table \ref{Excepts1}, not all the candidates for orbifold equivalence which at the same time are strangely dual have an associated potential which only has pure power monomials. In our case indeed a simpler shape of the potentials led to a simpler Galois group, but further analysis of other cases may give us some hints about how the Galois groups vary for each expression of the potentials.



While proving orbifold equivalence, in both \cite{CRCR} and this paper we observe the repeated appearance of $C_2$ in the resulting Galois groups. We would like to investigate whether this is a coincidence or there is some intrinsic relationship with the structure of matrix factorizations. 


\quad Altogether, we look for(ward to) a better understanding of the orbifold equivalence, and we hope to provide further insights very soon.

\newpage
\begin{appendix}
\section{Counting invertible potentials -- by Ana Ros Camacho and Federico Zerbini}\label{appendix}

Besides the Arnold classification, one may ask the following question: given a polynomial ring with $n$ variables over the complex numbers, how many kinds of potentials can we have and what do they look like?

A partial answer is provided by the Kreuzer-Skarke theorem \cite{KS,HK}. In these papers they provide a graphical algorithm to generate potentials that we recall here.

Fix a regular set of weights. We call a \textit{configuration} the set of polynomials in $\mathbb{C} \left[x_1,\ldots,x_n \right]$ with this regular set of weights. A classification of potentials is encoded in certain graphs representing configurations. Every variable is represented by a dot, and a term of the form $x_i^a x_j$ is represented by an arrow from $x_i$ to $x_j$ (``\textit{$x_i$ points at $x_j$''}).

\begin{defn}
We call a variable $x_i$ a \textit{root} if the polynomial $W$ contains a term $x_i^a$. A monomial $x_j^a x_k$ is called a \textit{pointer} at $x_k$. The number $a$ is called the \textit{exponent} of $x_i$ or $x_j$, respectively. We recursively define a link between two expressions, which may themselves be variables or links, as a monomial depending only on the variables occurring in these expressions. A link may further be linear in additional variables, which don't count as variables of the link. In this case we say that the link points at $x_k$, extending the definition of a pointer. It is possible that a specific monomial could have more than one interpretation as a link or a pointer. Given a potential $W$, any graph whose lines allow the above interpretation in terms of monomials in $W$ is a graphic representation of $W$.
\end{defn}

The following result is taken verbatim from \cite{KS}.
\begin{thm}\footnote{This theorem has been reformulated in a slightly more general setting in \cite{HK}, but we keep here the original formulation from \cite{KS} as the graphical language proves intuitive and useful for explanations.}
\label{thm:KS}
For a configuration a necessary and sufficient condition for a polynomial to be a potential is that it has a member which can be represented by a graph where:
\begin{enumerate}
\item Each variable is either a root or points at another variable.
\item For any pair of variables and/or links pointing at the same variable $x_i$ there is a link joining the two pointers and not pointing at $x_i$ or any of the targets of the sublinks which are joined\footnote{We will draw these links as dotted arrows to distinguish them from those coming from the first condition of the theorem.}.
\end{enumerate}

\end{thm}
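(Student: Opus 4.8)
The plan is to translate the statement into singularity theory and then establish the two graph conditions as, respectively, necessary and sufficient for the fixed configuration to contain a potential. Recall that a polynomial $W$ is a potential exactly when its Jacobian ring is finite-dimensional, equivalently when the only common zero of $\partial_1 W,\ldots,\partial_n W$ is the origin. Since the configuration is determined by a regular set of weights, all of its members are quasihomogeneous of the same degree, so throughout I would work with the critical locus $V(\partial_1 W,\ldots,\partial_n W)\subseteq\mathbb{C}^n$ and aim to characterise combinatorially when the \emph{generic} member of the configuration has this locus equal to $\{0\}$. The theorem then becomes: the generic member is nondegenerate if and only if some member has a graphic representation satisfying Conditions~(1) and~(2).

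First I would record the structural reduction already used in the main text, namely that for any candidate potential one may assume each variable $x_i$ occurs only as a pure power $x_i^{a}$ (a \emph{root}) or inside a term $x_i^{a}x_j$ (a \emph{pointer}). This is precisely what makes Condition~(1) meaningful, and I would prove its necessity directly: if some $x_i$ were neither a root nor a pointer, the reduction forces $W$ to be independent of $x_i$, and then every point of the $x_i$-axis is critical, giving a one-dimensional singular locus and contradicting $V(\partial_1 W,\ldots,\partial_n W)=\{0\}$. Thus Condition~(1) is necessary, and it is exactly what draws the solid arrows of the graph.

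The heart of the matter is Condition~(2), which I would treat in two directions. For \textbf{necessity} I would argue by contraposition: given two pointers (variables or links) $P_1,P_2$ aimed at the same $x_i$ with no joining link, I would exhibit an explicit nonzero $p$ annihilating every partial. The idea is to support $p$ on the variables feeding $P_1$ and $P_2$ while setting $x_i=0$; the equation from $\partial_{x_i}W$ then reads, schematically, $P_1(p)+P_2(p)=0$, which admits a nontrivial solution, and the \emph{absence} of a joining link is exactly what leaves enough freedom for the remaining partials to vanish as well, producing a critical point off the origin. For \textbf{sufficiency} I would show that when both conditions hold, the generic member has $V(\partial_1 W,\ldots,\partial_n W)=\{0\}$, by inducting on the number of variables and exploiting the recursive link structure: assuming a nonzero common zero $p$ with support $S$, Condition~(1) makes each variable in $S$ a root or pointer, while Condition~(2) supplies, for each pair of pointers hitting a common target, a joining link whose partial derivatives furnish exactly the extra equation needed to force one further coordinate of $p$ to vanish. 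Iterating shrinks $S$ to $\emptyset$, contradicting $p\neq 0$, and the genericity of the coefficients guarantees that the leading terms (subresultants) appearing at each elimination step are nonzero.

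The main obstacle I anticipate is making the sufficiency induction rigorous in the presence of the \emph{recursive} notion of a link — links joining links, together with the rule that a joining link must not point at $x_i$ nor at the targets of the sublinks it joins. Tracking which monomials survive after successively specialising coordinates, and verifying that Condition~(2) really delivers a new, independent equation at each stage rather than a redundant one, is the delicate combinatorial bookkeeping; coupling this to a clean genericity statement ensuring that a single nondegenerate member actually exists is where the real work lies. By comparison the necessity direction is largely mechanical once the explicit degenerate points are written down.
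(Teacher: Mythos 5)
You should know at the outset that the paper contains no proof of this statement: Theorem~\ref{thm:KS} is quoted verbatim from \cite{KS}, and the authors explicitly defer to that reference. So there is no internal argument to compare yours against; your proposal has to stand on its own, and it does not. The most serious structural problem is the ``structural reduction'' you invoke first, namely that in any candidate potential each variable occurs \emph{only} as a pure power $x_i^{a}$ or inside a term $x_i^{a}x_j$. This is false: potentials may, and in general must, contain other monomials --- the links of Condition~(2) are precisely such monomials (the Type VI and VII potentials of Table~\ref{3vars} are examples), and what \emph{is} true (that every variable is a root or a pointer, i.e.\ Condition~(1)) is part of the statement being proved. Indeed, in the main text this fact is \emph{deduced from} the Kreuzer--Skarke theorem, so importing it as a known reduction is circular. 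The error propagates into your necessity argument for Condition~(1): a variable $x_i$ that is neither a root nor a pointer does not force $W$ to be independent of $x_i$. The correct (and standard) argument is that all partials $\partial_k W$ then vanish identically along the $x_i$-axis, since a monomial whose $\partial_k$ is nonzero at a generic point of that axis must have the form $x_i^{a}$ (for $k=i$) or $x_i^{a}x_k$ (for $k\neq i$); the conclusion survives, but not via your premise.

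The deeper gap is in the direction that carries the real content of the theorem. For necessity of Condition~(2) your argument is only schematic: you must actually construct the degenerate critical point, the pointers $P_1,P_2$ may be links (recursively defined monomials) rather than variables, and there is a quantifier subtlety you ignore --- the hypothesis is that \emph{no} member of the configuration admits \emph{any} good graph representation, so exhibiting a degenerate point for one member with one bad graph does not conclude; one must argue uniformly over the configuration. For sufficiency, your induction ``Condition~(2) supplies exactly the extra equation needed to force one further coordinate of $p$ to vanish'' names the difficulty without solving it: you never say how the joining link's partial derivatives eliminate a coordinate, why the new equation is independent of the previous ones (this is exactly the role of the requirement that the link not point at $x_i$ or at the targets of the joined sublinks), or how the genericity claim is justified (one needs nondegeneracy to be a nonempty Zariski-open condition on the coefficient space, and nonemptiness is precisely what is at stake). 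You acknowledge this is ``where the real work lies,'' and that work is absent; as it stands the proposal is a plan, not a proof.
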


Let us explain how this theorem works presenting a couple of examples for a small number of variables:
\begin{itemize}
\item \cite{Ar,AGV} For $n=2$, we find three graphs:
\begin{center}
\begin{tabular}{|c|c|c|}
\hline
$\xymatrix{ \bullet & \bullet}$ & $\xymatrix{ \bullet  \ar[r] & \bullet}$ & $\xymatrix{ \bullet \ar@/^/[r] & \bullet \ar@/^/[l]}$ \\ 
Type I & Type II & Type III \\ \hline
\end{tabular}
\end{center}

\item \cite{Ar,AGV} For $n=3$, we find seven graphs as specified in Table \ref{3vars}.
\end{itemize}
\begin{table}
\begin{center}
\begin{tabular}{|c|c|c|c|}
\hline
$\xymatrix{ & \bullet & \\ \bullet && \bullet}$ & $\xymatrix{ & \bullet \ar[dr] & \\ \bullet && \bullet}$ & $\xymatrix{ & \bullet \ar@/^/[dr]& \\ \bullet && \bullet \ar@/^/[ul]}$ & $\xymatrix{ & \bullet \ar[dr] & \\ \bullet \ar[ur] && \bullet}$ \\ 
Type I & Type II & Type III & Type IV \\ \hline
$\xymatrix{ & \bullet \ar[dr] & \\ \bullet \ar[ur] && \bullet \ar[ll]}$ & $\xymatrix{ & \bullet \ar[dr] \ar@{.}[dl] & \\ \bullet \ar[rr] && \bullet}$ & $\xymatrix{ & \bullet \ar@/^/[dr] \ar@{.}[dl] & \\ \bullet \ar[rr] && \bullet \ar@/^/[ul]}$ & \\
Type V & Type VI & Type VII &  \\ \hline 
\end{tabular}
\caption{Types of potentials for $n=3$.}
\label{3vars}
\end{center}
\end{table}

\begin{rem}
Notice that for $n=3$ the second condition of Theorem \ref{thm:KS} is only relevant for Types VI and VII. Actually, one can reformulate this second condition for Types VI and VII as follows \cite{AGV}.  Every potential of Type VI contains a monomial in $\lbrace x^a,y^b x,z^c x \rbrace$,  and those of Type VII contain a monomial in $\lbrace x^a y,y^b x,z^c x \rbrace$ (up to suitable changes of variables). The exponents of these potentials must satisfy the following conditions:
\begin{itemize}
\item Type VI: the least common multiple of $b$ and $c$ must be divisible by $a-1$.
\item Type VII: $\left( b-1 \right) c$ must be divisible by the product of $a-1$ and the greatest common divisor of $b$ and $c$.
\end{itemize}
\end{rem}

The potentials generated via this theorem can be divided into two classes:

\begin{defn}
\begin{itemize}
\item Let $W$ be a potential. We say $W$ is \textit{invertible} when the following conditions are satisfied:
\begin{itemize}
\item The number of variables $n$ coincides with the number of monomials in $W$, $$W \left( x_1,\ldots,x_n \right)= \sum_{i=1}^n a_i \prod_{j=1}^n x_j^{E_{ij}}$$ for some coefficients $a_i \in \mathbbm{C}^*$ and $E_{ij} \in \mathbb{Z}_{\geq 0}$.
\item The matrix $E :=\left( E_{ij} \right)$ is invertible over $\mathbb{Q}$.
\item \cite{BH} The \textit{Berglund-H{\"u}bsch transpose} of $W$, written $W^T$ and defined by $$W^T \left( x_1,\ldots,x_n \right)=\sum_{i=1}^n a_i \prod_{j=1}^n x_j^{E_{ji}},$$ is also a potential.
\end{itemize}
\item If a potential is not invertible, we call it a \textit{beserker}. 
\end{itemize}
\end{defn}

As an example, notice that a potential in two variables is always invertible. In three variables, it is invertible if it is of type I--V, and it is a beserker if it is of type VI or VII.

\begin{rem}
\begin{itemize}
\item The Berglund-H{\"u}bsch transposition is closely related to mirror symmetry and the Landau-Ginzburg/Calabi-Yau correspondence, see for example \cite{Chi} or \cite{ET}. 
\item For the potentials associated to the singularities $Q_{10}$ and $K_{14}$, notice that in Version \ref{version1} we present they are Berglund-H{\"u}bsch transposes of each other, while this is not the case in Version \ref{version2}.
 Actually, whenever we take the Berglund-H{\"u}bsch transpose of a potential (from the first column of Table \ref{Excepts1})  described by an exceptional unimodular singularity, we either obtain the same potential or the corresponding strange dual.
\item In addition, notice that the Berglund-H{\"u}bsch transposition preserves the central charge for invertible potentials \cite{ARC}, which suggests that Berglund-H{\"u}bsch may be a source of orbifold equivalences (see Remark \ref{charges}).
\item For invertible potentials, the Berglund-H{\"u}bsch transposition corresponds graphically to reversing the directions of the arrows.
\end{itemize}
\end{rem}

\begin{rem}\label{invtypes}
Invertible potentials can only be of three types (or combinations of them) \cite{KS}:
\begin{itemize}
\item \textit{Fermat}: $x_1^{a_1}+x_2^{a_2}+\ldots+x_n^{a_n}$
\item \textit{Chain}: $x_1^{a_1} x_2+x_2^{a_2} x_3+\ldots+x_{n-1}^{a_{n-1}}+x_n^{a_n}$
\item \textit{Loop}: $x_1^{a_1} x_2+x_2^{a_2} x_3+\ldots+x_{n-1}^{a_{n-1}}+x_n^{a_n} x_1$
\end{itemize}

Translating this in terms of dots and arrows, the Fermat part of the potentials is represented by isolated dots (see Type I in Table \ref{3vars}), the chain part by the union of all chains, i.e. the sequences of arrows leading from one dot to another distinct dot (see Type IV), and the loop part by the union of all loops, i.e. the sequences of arrows leading from one dot to itself (see Type V).
This means that the invertible potentials are in one-to-one correspondence with mappings of $n$ points to themselves that never involve two points mapping to a third distinct point. 
\end{rem}
A question that may arise at this point is, using this description in terms of mappings of points, how many invertible potentials do we get for a given number of variables?

We denote by $P(n)$ the number of partitions of $n$, and we denote by $P_1(n)$ the number of partitions of $n$ where we exclude parts with cardinality $1$. For example $P(3)=3$, because we have the partitions $\{3\}$, $\{2,1\}$ and $\{1,1,1\}$, but $P_1(3)=1$, since only $\{3\}$ is allowed.\\
It is easy to see that the two sequence are related by $P_1(n)=P(n)-P(n-1)$.
\begin{prop}
The number of invertible potentials (or, for brevity, invertibles) is given by 
\begin{equation}\label{inv}
\mathrm{Inv}(n)=1+2\sum_{k=2}^nP_1(k)+\sum_{k=4}^n\sum_{i=2}^{k-2}P_1(i)P_1(k-i).
\end{equation}
\end{prop}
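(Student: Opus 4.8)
The plan is to count, following Remark~\ref{invtypes}, the isomorphism classes of functional graphs on $n$ points in which no two distinct points map to a common third distinct point, since these are precisely the invertible potentials. First I would show that every such graph decomposes uniquely into connected pieces of only two shapes: \emph{loops} (directed cycles of length $\geq 2$) and \emph{chains} (directed paths terminating at a fixed point), the Fermat variables being exactly the chains of length~$1$. This follows from the in-degree condition. Along a cycle of length $\geq 2$ every vertex already receives an arrow from its predecessor, so nothing else may point at it and the whole component is a bare loop. At a fixed point $c$ at most one other vertex may point at $c$, and inductively at most one vertex points at each successive preimage (none of which is itself a fixed point), so the component feeding into $c$ is an unbranched path, i.e.\ a chain. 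Thus the decomposition into loop-, chain-, and Fermat-components is forced and unique.

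Next I would count these multisets of loops and chains. Writing an invertible potential as its loop part, its chain part, and its remaining Fermat variables, the three parts are chosen independently. A multiset of chains of sizes $\geq 2$ with total size $i$ is the same datum as a partition of $i$ into parts $\geq 2$, so there are $P_1(i)$ of them; likewise there are $P_1(j)$ multisets of loops of total size $j$; and the remaining $n-i-j$ variables must all be Fermat, which happens in exactly one way. Summing over admissible sizes gives
\begin{equation*}
\mathrm{Inv}(n)=\sum_{\substack{i,j\geq 0\\ i+j\leq n}}P_1(i)P_1(j)=\sum_{m=0}^{n}Q(m),\qquad Q(m):=\sum_{i=0}^{m}P_1(i)P_1(m-i),
\end{equation*}
the convolution $Q(m)$ counting the (chain-part, loop-part) pairs of combined size $m$. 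Equivalently this is the identity $\sum_{n}\mathrm{Inv}(n)x^{n}=\tfrac{1}{1-x}\prod_{k\geq 2}(1-x^{k})^{-2}$, whose three factors record Fermats, chains, and loops.

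Finally I would rewrite $\sum_{m=0}^{n}Q(m)$ in the stated shape using the elementary values $P_1(0)=1$ and $P_1(1)=0$. The term $m=0$ contributes $Q(0)=P_1(0)^{2}=1$, and $Q(1)=0$. For $m\geq 2$ the two end terms $i=0$ and $i=m$ each give $P_1(m)$, the terms $i=1$ and $i=m-1$ vanish, and the interior terms give $\sum_{i=2}^{m-2}P_1(i)P_1(m-i)$; hence $Q(m)=2P_1(m)+\sum_{i=2}^{m-2}P_1(i)P_1(m-i)$, the interior sum being empty for $m=2,3$. Summing over $m$ from $2$ to $n$ then yields exactly \eqref{inv}.

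The main obstacle is not any single calculation but getting the bijection exactly right. One must notice that chains and loops of size~$1$ both collapse to a single Fermat atom (so there is one atom of size~$1$ but two of each size $\geq 2$), and that because parts of equal size are indistinguishable one \emph{cannot} simply weight each partition of $n$ by $2^{\#\{\text{parts}\geq 2\}}$. Decomposing the structure independently into a chain-partition and a loop-partition is what circumvents this and produces the convolution of $P_1$'s; the remaining manipulation into the precise index ranges of \eqref{inv} is routine bookkeeping.
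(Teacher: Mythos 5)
Your proof is correct, and it rests on exactly the same combinatorial decomposition as the paper's: an invertible potential is a multiset of isolated (Fermat) dots, chains of size $\geq 2$, and loops of size $\geq 2$, so the non-Fermat part is counted by a convolution of $P_1$'s. The difference lies in how this count is assembled into \eqref{inv}. The paper proceeds by recursion: it observes that $\mathrm{Inv}(n)-\mathrm{Inv}(n-1)$ counts the invertibles with no isolated dot, identifies this difference with $2P_1(n)+\sum_{i=2}^{n-2}P_1(i)P_1(n-i)$, and then deduces \eqref{inv} by induction, anchored on values of $\mathrm{Inv}(n)$ computed by hand for small $n$. You instead sum the convolution $Q(m)=\sum_{i=0}^{m}P_1(i)P_1(m-i)$ directly over $m=0,\dots,n$ and convert it to the stated index ranges using $P_1(0)=1$ and $P_1(1)=0$; this is precisely the telescoped form of the paper's recursion, so the two arguments have the same content, but yours is more self-contained: it needs no separately verified base cases, and it also supplies the in-degree argument showing that components must be bare loops or unbranched chains, a structural fact the paper simply imports from Remark \ref{invtypes} (i.e.\ from \cite{KS}). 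Your caveat about why one cannot weight each partition by $2^{\#\{\text{parts}\geq 2\}}$ is well taken, and your generating-function identity $\frac{1}{1-x}\prod_{k\geq 2}\left(1-x^k\right)^{-2}=(1-x)\prod_{m\geq 1}\left(1-x^m\right)^{-2}$ recovers Equation \ref{gen} of the paper directly, without passing through the rewriting of Remark \ref{rem}.
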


\begin{proof}
First, notice that the number of invertibles given by Equation \ref{inv} matches our computation by hand:
\begin{equation}
\begin{split}
\mathrm{Inv} \left( 2 \right) &=3 \\
\mathrm{Inv} \left( 3 \right) &=5 \\
\mathrm{Inv} \left( 4 \right) &=10 \\
\mathrm{Inv} \left( 5 \right) &=16 \\
\mathrm{Inv} \left( 6 \right) &=29 \\
\mathrm{Inv} \left( 7 \right) &=45 \\
\mathrm{Inv} \left( 8 \right) &=75 \\
\mathrm{Inv} \left( 9 \right) &=115 \\
\end{split}
\nonumber
\end{equation}
The proof relies on the fact that one can easily compute $\mathrm{Inv}(n)$ if $\mathrm{Inv}(n-1)$ is given. This is  because every invertible with $n$ dots that has at least one isolated dot can be thought as an invertible with $n-1$ dots plus the mentioned isolated one. This means that counting the invertibles without isolated dots is the same as computing $\mathrm{Inv}(n)-\mathrm{Inv}(n-1)$.\\
One can think of an invertible without isolated dots as divided into 2 blocks: one constituted by chains and one constituted by loops. Note that the number of dots in any chain or loop is at least 2, so one gets the following intuitive formula:
\[
\mathrm{Inv}(n)-\mathrm{Inv}(n-1)=2P_1(n)+\sum_{i=2}^{n-2}P_1(i)P_1(n-i),
\]
where $2P_1(n)$ counts the invertibles constituted either only by chains or only by loops (this is why there is a factor $2$!), and the sum counts the invertibles with a mix of chains and loops.
Now the proof of Equation \ref{inv} is trivial, because we already now that it works for the first values of $n$, so we just need to check that Equation \ref{inv} also gives the difference predicted above, which is straightforward.
\end{proof}

\begin{rem}\label{rem}
(Courtesy of G. Sanna) This formula can be rewritten as 
\begin{equation}\label{gian}
\mathrm{Inv}(n)=\sum_{k=0}^n P(n-k)[P(k)-P(k-1)],
\end{equation}
with $P(0):=1$, $P(-1):=0$. One can easily prove that the two formulas give the same result by induction, rewriting $P(n-k)$ as $P_1(n-k)+P(n-1-k)$ in Equation \ref{gian} and using the fact that $P_1(1)=0$ and that $P_1(k)=P(k)-P(k-1)$.
\end{rem}

Thanks to Remark \ref{rem}\footnote{Actually, the generating function was found originally using Equation \ref{inv} and without Equation \ref{gian}, but the proof was less elegant.}, one can immediately see what is the generating function for the numbers $\mathrm{Inv}(n)$:
\begin{cor}
Setting $\mathrm{Inv}(0):=1$, we have
\begin{equation}\label{gen}
\sum_{n\geq 0}\mathrm{Inv}(n)q^n=(1-q)\prod_{m\geq 1}(1-q^m)^{-2}
\end{equation}
\end{cor}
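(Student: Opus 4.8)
The plan is to read the formula of Remark \ref{rem} as a Cauchy product and then apply Euler's classical generating function for the partition function. The key observation is that
\[
\mathrm{Inv}(n)=\sum_{k=0}^n P(n-k)\,[P(k)-P(k-1)]
\]
expresses $\mathrm{Inv}(n)$ as the convolution of the two sequences $\bigl(P(m)\bigr)_{m\geq 0}$ and $\bigl(P(k)-P(k-1)\bigr)_{k\geq 0}$, so that the generating function $\sum_{n\geq 0}\mathrm{Inv}(n)q^n$ factors as the \emph{product} of the two corresponding generating functions. This reduces the whole statement to identifying those two factors.

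First I would recall Euler's identity, namely that the ordinary generating function of the partition function is
\[
F(q):=\sum_{m\geq 0}P(m)q^m=\prod_{m\geq 1}(1-q^m)^{-1}.
\]
Next I would compute the generating function of the difference sequence. Using the conventions $P(0)=1$ and $P(-1)=0$ fixed in Remark \ref{rem}, a shift of index gives $\sum_{k\geq 0}P(k-1)q^k=qF(q)$, and therefore
\[
\sum_{k\geq 0}\bigl[P(k)-P(k-1)\bigr]q^k=F(q)-qF(q)=(1-q)F(q).
\]
With both factors in hand, the convolution identity yields
\[
\sum_{n\geq 0}\mathrm{Inv}(n)q^n=F(q)\cdot(1-q)F(q)=(1-q)F(q)^2=(1-q)\prod_{m\geq 1}(1-q^m)^{-2},
\]
which is exactly Equation \eqref{gen}.

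There is no genuine analytic obstacle here; the only point requiring care is the bookkeeping at the boundary. I would check explicitly that the extended conventions are mutually consistent, i.e. that the formula of Remark \ref{rem} returns $\mathrm{Inv}(0)=P(0)[P(0)-P(-1)]=1$, matching the convention $\mathrm{Inv}(0):=1$ imposed in the statement, and that this agrees with the constant term of $(1-q)F(q)^2$. Once these conventions are pinned down, the identity is a formal manipulation of power series in $\mathbb{Q}[[q]]$ and needs no convergence argument. The slightly delicate step, should one wish to be fully rigorous, is simply to justify that the product of the two generating series reproduces the convolution termwise, which is immediate since all series involved have nonnegative integer coefficients and hence multiply as formal power series without any reordering issues.
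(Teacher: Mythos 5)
Your proof is correct and follows essentially the same route as the paper's: both start from the formula of Remark \ref{rem}, invoke Euler's identity $\sum_{m\geq 0}P(m)q^m=\prod_{m\geq 1}(1-q^m)^{-1}$, and match coefficients via the Cauchy product, the only cosmetic difference being that you group the factors as $F(q)\cdot\bigl[(1-q)F(q)\bigr]$ while the paper expands $(1-q)F(q)^2$ directly. Your explicit check of the boundary conventions $P(0)=1$, $P(-1)=0$, $\mathrm{Inv}(0)=1$ is a nice touch but does not change the substance of the argument.
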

\begin{proof}
Expanding every term in the product on the right as a power series in $q$ shows that
\begin{equation*}
\sum_{n\geq 0}P(n)q^n=\prod_{m\geq 1}(1-q^m)^{-1}.
\end{equation*}
So the left hand side of Equation \ref{gen} can be rewritten as
\[
(1-q)\Big(\sum_{n\geq 0}P(n)q^n\Big)^2.
\]
The result now follows from the observation that
\[
\Big(\sum_{n\geq 0}P(n)q^n\Big)^2=\sum_{n\geq 0}\Big(\sum_{k=0}^nP(n-k)P(k)\Big)
\]
and
\[
q\Big(\sum_{n\geq 0}P(n)q^n\Big)^2=\sum_{n\geq 0}\Big(\sum_{k=0}^nP(n-k)P(k-1)\Big).
\]
\end{proof}
Note that this is the same generating function as the one generating the sequence A000990 in the Encyclopedia of Integer Sequences (\cite{Slo}), which counts the number of plane partitions of n with at most two rows.

\end{appendix}

\newpage

\newcommand\arxiv[2]      {\href{http://arXiv.org/abs/#1}{#2}}
\newcommand\doi[2]        {\href{http://dx.doi.org/#1}{#2}}
\newcommand\httpurl[2]    {\href{http://#1}{#2}}

\end{document}